\newtheorem{proposition}{Proposition}
\newcommand\copyrighttext{%
  \footnotesize \textcopyright 2020 IEEE. Personal use of this material is permitted. Permission from IEEE must be obtained for all other uses, in any current or future media, including reprinting/republishing this material for advertising or promotional purposes, creating new collective works, for resale or redistribution to servers or lists, or reuse of any copyrighted component of this work in other works.}
\newcommand\copyrightnotice{%
\begin{tikzpicture}[remember picture,overlay]
\node[anchor=south,yshift=10pt] at (current page.south) {\fbox{\parbox{\dimexpr\textwidth-\fboxsep-\fboxrule\relax}{\copyrighttext}}};
\end{tikzpicture}%
}
\begin{document}
\title{Efficient Implementation of Rate Constraints \\for Nonlinear Optimal Control}

\author{Yuanbo Nie and Eric C. Kerrigan 
\thanks{Yuanbo Nie and Eric C. Kerrigan are with the Department of Aeronautics, Imperial College London, SW7~2AZ, U.K. {\tt\small yn15@ic.ac.uk}, {\tt\small 
e.kerrigan@imperial.ac.uk}}%
\thanks{Eric C. Kerrigan is also with the Department of Electrical \& Electronic Engineering, Imperial College London, London SW7~2AZ, U.K.}%
\thanks{Accepted version to be published in: IEEE Transactions on Automatic Control}%
}

\maketitle
\copyrightnotice

\begin{abstract}

We propose a general approach to directly implement rate constraints on the discretization mesh for all collocation methods, for both state and input variables. Unlike conventional approaches that may lead to  singular control arcs,  the solution of this on-mesh implementation has better  properties. Moreover, computational speedups of more than 30\%   can be achieved by exploiting the properties of the resulting linear constraint equations. 
\end{abstract}

\begin{IEEEkeywords}
optimal control, direct collocation method, rate constraints, singular control
\end{IEEEkeywords}

\section{Introduction}
Optimization-based control strategies, such as model predictive control (MPC), can be seen in an increasing number of applications. For many  engineering problems,  constraints may need to be imposed on the rate of changes for the state and/or input variables, to account for physical actuation limitations (e.g.\ the maximum rotation rate of flight control surfaces on aircraft) or to fulfill certain ride comfort requirements (e.g.\ the maximum longitudinal and lateral accelerations experienced by passengers). 

In optimal control, the underlying optimization problem can often be formulated and implemented in a number of different ways. Under a linear framework, many implementations are computationally comparable, thus  straightforward  approaches are often used.  For example, rate constraints on input variables are generally implemented through additional dynamic equations  \cite{wang2009model, maciejowski2002predictive}, and rate constraints on state variables are commonly addressed with additional path constraints~\cite{deori2015model}. However, under a nonlinear framework, this way of implementing input rate constraints are known to result in numerical difficulties and introducing fluctuations and ringing phenomena in the solution due to singular control \cite{betts2010practical}.  To improve the solution quality, additional regularization terms may be added to the optimal control problem (OCP) formulation \cite{Bonnans2008}; however, this practice often leads to computational challenges by needing to solve the problem repetitively with appropriate weightings.

In our previous work, we proposed implementing rate constraints directly on the discretization mesh with linear constraints \cite{ratePaper}. In this paper, we present a more in-depth  analysis of this method. We demonstrate that the proposed method will not introduce singular arcs to the problem, resulting in solutions of higher accuracy than the conventional approach. The computational comparisons with the  conventional  implementation as well as the regularization approach are also notably expanded with further insights. 

Sections~\ref{sec:OptimizationBasedControl}--\ref{sec:DirectTranscriptionMethod} aim at providing a brief introduction on solving OCPs with direct collocation methods. Following this, different approaches for implementing rate constraints in the OCP are introduced and analysed in Section \ref{sec:AlgebraicVariableRateConstraints}. This is followed by two classical examples of different complexity in Section \ref{sec:ExampleProblems}, where the pros and cons of each implementation are demonstrated. In Section \ref{sec:Conclusion}, we provide concluding remarks and some guidelines for implementation. 

\section{Optimal Control Problem}
\label{sec:OptimizationBasedControl}
Generally speaking, optimization-based control requires the solution of  optimal control problems with the objective functional expressed in the general Bolza form:

\begin{subequations}
\label{eqn:OCPBolza}
\begin{equation}
\min_{x,u,p,t_0,t_f} \Phi(x(t_0),t_0,x(t_f),t_f,p)
+\int_{t_0}^{t_f} L(x(t),u(t),t,p) dt
\end{equation}
subject to
\begin{align}
\dot{x}(t)=f(x(t),u(t),t,p),\ &\forall t \in [t_0,t_f] \label{eqn:OCPBolzaDynamics}\\
c(x(t),u(t),t,p)\le 0,\ &\forall t \in [t_0,t_f]\\
\phi(x(t_0),t_0,x(t_f),t_f,p) =0,\ &
\end{align}
\end{subequations}
 with $x: \mathbb{R} \rightarrow \mathbb{R}^n$ is the state trajectory of the system, $u: \mathbb{R} \rightarrow \mathbb{R}^m$ is the control input trajectory,   $p \in \mathbb{R}^s$ are static parameters, $t_0 \in \mathbb{R}$ and $t_f \in \mathbb{R}$ are the initial and terminal time.  $\Phi$ is the Mayer cost functional ($\Phi$: $\mathbb{R}^n \times \mathbb{R} \times \mathbb{R}^n \times \mathbb{R} \times \mathbb{R}^s \to \mathbb{R}$), $L$ is the Lagrange cost functional ($L:\mathbb{R}^n \times \mathbb{R}^m \times \mathbb{R} \times \mathbb{R}^s \to \mathbb{R}$), $f$ is the dynamic constraint ($f:\mathbb{R}^n \times \mathbb{R}^m \times \mathbb{R} \times \mathbb{R}^s \to \mathbb{R}^n$), $c$ is the path constraint ($c:\mathbb{R}^n \times \mathbb{R}^m \times \mathbb{R} \times \mathbb{R}^s \to \mathbb{R}^{n_g}$) and $\phi$ is the boundary condition ($\phi:\mathbb{R}^n \times \mathbb{R} \times \mathbb{R}^n \times \mathbb{R} \times \mathbb{R}^s \to \mathbb{R}^{n_q}$).

\section{Direct collocation method}
\label{sec:DirectTranscriptionMethod}

Most optimal control problems need to be solved with numerical discretization schemes in practice. With a direct method, the OCP is first discretized through a transcription process, after which the resulting nonlinear programming (NLP) problem is numerically solved. Thanks to its simplicity in implementation, direct methods have become the de facto standard for solving practical optimal control problems \cite{limebeer2015faster}. 

One approach in direct methods is to solve the dynamics equations, the path constraints and the boundary conditions altogether on a discretization mesh. This is often referred to as direct collocation methods. Moreover, it can be further categorized into fixed-order $h$ methods (e.g.\ Euler, Trapezoidal, Hermite-Simpson (H-S) and the Runge-Kutta (RK) family) \cite{betts2010practical}, and variable higher-order $p$/$hp$ methods \cite{fahroo2008advances,liu2014hp}. Here, we aim to provide a high level overview, which is valid for both $h$ and $p$/$hp$ methods. 

With a mesh of size $N=\sum_1^K N^{(k)}$, the states can be approximated as

\begin{equation}
\label{eqn: LGRStateApproximation}
x^{(k)}(\tau) \approx \bar{x}^{(k)}(\tau) := \sum_{j=1}^{N^{(k)}}\mathcal{X}_j^{(k)}\mathcal{B}_{j}^{(k)}(\tau),
\end{equation}

with mesh interval $k$ $\in$ $\{1$, $\hdots$, $K\}$, $N^{(k)}$ denoting the number of collocation points  for  interval $k$, and $\mathcal{B}_{j}^{(k)}(\cdot)$ are basis functions. For classical $h$ methods, $\tau \in \mathbb{R}^{N}$ takes on values in the interval $[0,1]$ representing $[t_0,t_f]$, and $\mathcal{B}_{j}^{(k)}(\cdot)$ are chosen to be elementary B-splines of various orders. For $p$/$hp$ methods, $\mathcal{B}_{j}^{(k)}(\cdot)$ are Lagrange interpolating polynomials over the normalized time interval $\tau$ $\in$ $[-1,1]$.  We use $X_j^{(k)}$ and $U_j^{(k)}$ to represent the approximated states and inputs at collocation points, e.g.\  $X_j^{(k)}=\bar{x}^{(k)}(\tau_j^{(k)}) \in \mathbb{R}^{n}$, where $\tau_j^{(k)}$ is the $j^\text{th}$ collocation point in mesh interval~$k$.

Consequently, the optimal control problem~\eqref{eqn:OCPBolza} can be approximated by
\begin{subequations}
\label{eqn:LGRStateApproximationCost}
\begin{multline}
\min_{X,U,p,t_0,t_f}  \Phi(X_1^{(1)},t_0,X_{f}^{(K)},t_f,p)\\
+\sum_{k=1}^{K}\sum_{i=1}^{N^{(k)}} w_i^{(k)} L(X_i^{(k)},U_i^{(k)},\tau_i^{(k)},t_0,t_f,p)
\end{multline}
subject to, for $i=1,\hdots,N^{(k)}$ and $k=1,\hdots,K$,  
\begin{align}
\label{eqn:LGRStateApproximationCostDefect}
\sum_{j=1}^{N^{(k)}}\mathcal{A}_{ij}^{(k)}X_j^{(k)}+\mathcal{D}_{i}^{(k)}f(X_i^{(k)},U_i^{(k)},\tau_i^{(k)},t_0,t_f,p) = & 0 \\
\label{eqn:LGRStateApproximationPathConstraint}
c(X_i^{(k)},U_i^{(k)},\tau_i^{(k)},t_0,t_f,p)\le & 0  \\
\phi(X_1^{(1)},t_0,X_{f}^{(K)},t_f,p) =& 0
\end{align}
\end{subequations}
where $w_j^{(k)}$ are the quadrature weights for the respective discretization method chosen, $\mathcal{A}$ is the numerical differentiation matrix with $\mathcal{A}_{ij}$ the element $(i,j)$ of the matrix, and $\mathcal{D}$ a constant matrix. The discretized problem can then be solved with off-the-shelf NLP solvers, such as interior point solver \texttt{IPOPT} \cite{wachter2006implementation}.  

 The NLP solver outputs a discretized solution $\mathcal{Z} \coloneqq (X, U, p, \tau, t_0, t_f)$ as sampled data points, however it does not necessarily indicate that the solution is a discrete-time control sequence. In-between the sampled points, interpolating splines can be used to construct an approximation of the continuous-time optimal trajectory $t\mapsto \tilde{\mathcal{Z}}(t) \coloneqq (\tilde{x}(t), \tilde{u}(t), t, p)$ in accordance to the discretization method employed. The quality of the interpolated solution needs to be assured through error analysis. If necessary, modifications must be made accordingly to the discretization mesh, until the solutions obtained with the new mesh fulfills all predefined error tolerance levels (e.g.\ the absolute local error $\eta_{tol}$ and the absolute local constraint violation $\varepsilon_{c_{tol}}$).  This process is known as mesh refinement.

\section{Implementations of Rate Constraints}
\label{sec:AlgebraicVariableRateConstraints}
In many problems, constraints of the form 
\begin{align*}
\dot{u}_{L} \le \frac{du}{dt}(t) &\le \dot{u}_{U}\\
\dot{x}_{L} \le \frac{dx}{dt}(t) &\le \dot{x}_{U}
\end{align*}
may need to be implemented to restrict the rate of change for the state and/or input variables. 

\subsection{Conventional Implementation}

For input variables, a common approach is to introduce $u$ as an additional state variable, and $\nu$ as the new input with a simple bound through the dynamic equation
\begin{equation}
\label{eqn:SBDeltau}
\dot{u}(t)=\nu(t) \text{ with } \dot{u}_{L} \le \nu(t) \le \dot{u}_{U}.
\end{equation}
For rate constraints on the state variable $x$, additional path constraints are needed:
\begin{equation}
\label{eqn:SBPathf}
\dot{x}_{L} \le f(x(t),u(t),t,p) \le\ \dot{x}_{U}.
\end{equation}
For simplicity, we refer to \eqref{eqn:SBDeltau} as the \emph{add-state} implementation, and~\eqref{eqn:SBPathf} as the \emph{add-path constraint} implementation.

Unfortunately these  conventional  implementations exhibit many shortcomings. These are mainly:
\begin{enumerate}
\item The number of state variables and constraint equations are increased, resulting in a larger NLP. In addition, the index of the DAE (differential-algebraic equations) of the transcribed problem may also increase, leading to a problem that is often more difficult to solve numerically.  
\item When \eqref{eqn:SBDeltau} is used, singular arcs may occur and affect the solution quality. This can occur if the original control input $u$ appears nonlinearly in the Lagrange cost or other system dynamics and the new control $\nu$ appears linearly instead. 
\end{enumerate}

Detailed justification for the first point can be found in \cite{betts2010practical}. Here, we focus the discussions on the issue of singular arcs, which also falls with a much larger concern regarding the quality of the solution. 

Generally speaking, a singular arc is an interval in the OCP solution where the optimality conditions yield no information about the optimal control function. Precise mathematical definitions and an analysis of singular arcs can be found in \cite{hermes1963nonlinear} and \cite{straeter1970singular}. For ease of demonstration, consider the following OCP, which is simplified but still sufficiently general,  with $x_1 \in \mathbb{R}$ the state variable, and $u_1 \in \mathbb{R}$ the control input. As per the conventional approach, the rate constraint on the original control input is implemented with a new control input~$\nu_1 \in \mathbb{R}$: 

\begin{subequations}
\label{eqn:OCPGeneralSingular}
\begin{equation}
\min_{x_1,u,\nu} \int_{0}^{t_f} g_1(x_1(t)) + g_2(x_1(t),u_1(t)) dt
\end{equation}
subject to
\begin{align}
\dot{x_1}(t)=g_3(x_1(t),u_1(t)) \quad &\forall t \in [t_0,t_f]\\
\label{eqn:OCPGeneralSingularAddDynamics}
\dot{u}_1(t)=\nu_1(t) \quad &\forall t \in [t_0,t_f]\\
\label{eqn:OCPGeneralSingularAddSB}
\dot{u}_{1_{L}}  \le \nu_1(t) \le \dot{u}_{1_{U}} \quad &\forall t \in [t_0,t_f].
\end{align}
\end{subequations}
 Here we follow the same hypotheses as in \cite{Vinter2013}: $g_1$, $g_2$ and $g_3$ are continuous, continuously differentiable for all $u_1 \in \mathcal{U}$, and Lipschitz in $x_1$. Also, the admissible control set $\mathcal{U}$ is assumed to be a bounded set in some Euclidean space.

\begin{proposition}
\label{prop: OCPGeneralSingular}
If the OCP \eqref{eqn:OCPGeneralSingular} has a linear objective and dynamics with respect to the original control input $u_1$, i.e.\  if $g_2$ and $g_3$ are both functions that only have strictly linear input (i.e. in the form of $g_{\iota}(x_1(t),u_1(t))=\tilde{g}_{\iota}(x_1(t))+c_{st}u_1(t)$, with $c_{st}$ a constant),  the resulting optimal control $\nu_1^{\ast}$ will not contain a singular arc. However, if $u_1$ appears nonlinearly in the objective and/or dynamics, i.e.\ if $g_2$ and/or $g_3$ are arbitrary nonlinear functions,  there exists problems where singular arcs will occur  for some intervals of the solution. 
\end{proposition}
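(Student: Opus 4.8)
\emph{Proof strategy.} The plan is to apply the Pontryagin Minimum Principle to the augmented problem~\eqref{eqn:OCPGeneralSingular}, treating $(x_1,u_1)$ as the state, $\nu_1$ as the control, and introducing costates $\lambda_1,\lambda_2$. The Hamiltonian $H = g_1(x_1)+g_2(x_1,u_1)+\lambda_1 g_3(x_1,u_1)+\lambda_2\nu_1$ is affine in $\nu_1$, so $\nu_1^\ast$ is bang--bang governed by the switching function, which is $\lambda_2$, and — by the definition of a singular arc recalled above — a singular arc occurs exactly on an interval $I$ where $\lambda_2\equiv 0$. With the costate dynamics $\dot\lambda_1=-\partial H/\partial x_1$ and $\dot\lambda_2=-\bigl(\partial g_2/\partial u_1+\lambda_1\,\partial g_3/\partial u_1\bigr)$, the whole proof reduces to deciding when $\lambda_2$ can vanish on a subinterval.

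\emph{Linear case (first claim).} Writing $g_2=\tilde g_2(x_1)+c_2u_1$ and $g_3=\tilde g_3(x_1)+c_3u_1$, one gets $\dot\lambda_2=-(c_2+c_3\lambda_1)$, which involves neither $u_1$ nor $\nu_1$. Assume $\lambda_2\equiv0$ on some interval $I$, so $c_2+c_3\lambda_1\equiv0$ there. If $c_3=0$, this forces $c_2=0$, i.e.\ $u_1$ does not appear in objective or dynamics at all — a degenerate case I would exclude from the outset; otherwise $\dot\lambda_2\equiv-c_2\neq0$ and $\lambda_2$ is strictly monotone, contradicting $\lambda_2\equiv0$. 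If $c_3\neq0$, then $\lambda_1\equiv-c_2/c_3$ is constant on $I$, so $\dot\lambda_1\equiv0$, and the $\lambda_1$-equation collapses to the pointwise identity $g_1'(x_1)+\tilde g_2'(x_1)-(c_2/c_3)\tilde g_3'(x_1)\equiv0$ along $I$. By continuity of $x_1(\cdot)$, either $x_1$ is constant on $I$ — so $\dot x_1=0$, $u_1$ is constant, and $\nu_1^\ast=\dot u_1=0$ — or that identity holds over a whole interval of $x_1$-values, which makes the Lagrange integrand a total time derivative and the cost path-independent (again degenerate, hence excluded, possibly with a mild regularity hypothesis so that $x_1(\cdot)$ cannot traverse a zero-interval of the identity). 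In every non-degenerate case the optimality conditions therefore \emph{determine} $\nu_1^\ast$: bang--bang off $I$, and identically zero on any such $I$. Hence no singular arc.

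\emph{Nonlinear case (second claim).} This is an existence statement, so I would only need a single explicit witness — for instance $g_1\equiv0$, $g_3(x_1,u_1)=-x_1+u_1$, $g_2(x_1,u_1)=u_1^2$, with rate bounds $\dot u_{1_L}<0<\dot u_{1_U}$ chosen loosely enough, and boundary data on $x_1$ for which the rate constraint is inactive over a subinterval. There $\dot\lambda_1=\lambda_1$ and $\dot\lambda_2=-(2u_1+\lambda_1)$; on the subinterval where $\nu_1^\ast$ is interior the minimum principle forces $\lambda_2\equiv0$, hence $u_1=-\lambda_1/2$, and differentiating once more gives $\nu_1^\ast=\dot u_1=-\tfrac12\dot\lambda_1=-\tfrac12\lambda_1=u_1^\ast$ — an interior, smoothly varying control not pinned down by the first-order condition, i.e.\ a genuine singular arc. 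I would then verify that for suitable boundary data this arc is actually part of the globally optimal solution, not merely PMP-admissible.

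\emph{Main obstacle.} The Hamiltonian and costate bookkeeping is routine; the hard parts are, for the first claim, cleanly ruling out the degenerate data for which the chain of derivatives of $\lambda_2$ collapses, and, for the second claim, showing the candidate singular arc is \emph{realized} by the global optimum — which calls for an explicit optimal synthesis or a comparison/perturbation argument and is where most of the work lies.
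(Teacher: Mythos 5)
Your proposal is correct and follows the same high-level strategy as the paper: apply Pontryagin's minimum principle to the augmented problem, observe that the Hamiltonian is affine in $\nu_1$ so that $\lambda_{u_1}$ (your $\lambda_2$) is the switching function, and reduce everything to whether that switching function can vanish on an interval. Where you genuinely diverge is in the linear case. The paper argues that when $g_2$ and $g_3$ are affine in $u_1$, $\dot\lambda_{u_1}=-\partial H/\partial u_1$ is \emph{constant}, so $\lambda_{u_1}$ is a straight line with at most one zero crossing and the control is bang--bang with at most one switch. That step is only literally true when $c_3=0$ (or when $\lambda_{x_1}$ happens to be constant), since $\dot\lambda_{u_1}=-(c_2+c_3\lambda_{x_1}(t))$ and $\lambda_{x_1}$ generally varies in time; the paper glosses over this. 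Your contradiction argument --- assume $\lambda_2\equiv0$ on an interval, propagate the consequences through the costate equations, and show that every branch is either impossible or degenerate (cost independent of $u_1$, or a trivially determined $\nu_1^\ast\equiv0$) --- handles the $c_3\neq0$ branch that the paper's one-line claim does not, at the price of having to name and exclude the degenerate data explicitly. For the nonlinear case both you and the paper rely on a witness; the paper simply points to the second-order singular regulator of Section~\ref{subsec:SOSExample} (which, in the proposition's notation, is $g_1=x_1^2$, $g_2=u_1^2$, $g_3=u_1$), whereas you construct a closely analogous example and correctly flag that the real work is verifying the singular arc is realized by the global optimum rather than merely PMP-admissible --- a verification the paper also does not carry out analytically, deferring instead to the numerical evidence. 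In short, your route buys rigor on the linear claim that the paper's sketch lacks; the paper's buys brevity.
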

\begin{proof}
First, we formulate the Hamiltonian of the system, with~$\lambda(t)$ the costate of the dynamics
\begin{align}
\label{eqn:OCPGeneralSingularH}
\begin{split}
&H(x_1(t), u_1(t),  \lambda_{x_1}(t), \lambda_{u_1}(t), \nu_1(t)) := g_1(x_1(t))   \\
& + g_2(x_1(t),u_1(t)) + \lambda_{x_1}(t)g_3(x_1(t),u_1(t)) + \lambda_{u_1}(t)\nu_1(t).
\end{split}
\end{align}
From Pontryagin's minimum principle, we know that if the state and costate are optimal, the optimal control $\nu_1^\ast$  minimizes the Hamiltonian, i.e.
\begin{equation}
\label{eqn:OCPGeneralSingularUStar}
\nu_1^{\ast}(t) \in \arg \min_{v} H(x_1^{\ast}(t), u_1^{\ast}(t), \lambda_{x_1}^{\ast}(t), \lambda_{u_1}^{\ast}(t), v).
\end{equation}
Substituting the Hamiltonian  \eqref{eqn:OCPGeneralSingularH} in \eqref{eqn:OCPGeneralSingularUStar} yields the optimal control
\begin{equation}
\label{eqn:OCPGeneralSingularUStar2}
\nu_1^{\ast}(t) = \begin{cases}
 \dot{u}_{1_{U}} & \text{if }\lambda_{u_1}^{\ast}(t) < 0 \\
 ? & \text{if }\lambda_{u_1}^{\ast}(t) = 0 \\
 \dot{u}_{1_{L}} & \text{if }\lambda_{u_1}^{\ast}(t) > 0 
\end{cases}
\end{equation}

We first show that this implementation yields a singular arc free  solutions with linear OCPs. From the first order necessary conditions for optimality we have $\dot{\lambda}_{u_1}(t)=-\frac{\partial H}{\partial u_1}$. Thus,  if both $g_2$ and $g_3$ only contain input terms in the form of $c_{st}u_1(t)$, $\dot{\lambda}_{u_1}(t)$ will then be constant and $\lambda_{u_1}(t)$ will be a linear straight line. Relating this to \eqref{eqn:OCPGeneralSingularUStar2} we can see that the optimal control will exhibit bang-bang behaviour with at most one switch depending on the crossing of $\lambda_{u_1}(t)$ with the $x$ axis. Therefore, the solution is free of singular arcs if the objective and dynamics with respect to the original control input $u_1$ are all linear.  This is the reason why in linear optimal control problems, the conventional implementation can be used without issues.

 To show that this implementation will suffer from singular arc problems when nonlinear OCPs are considered, we assume that $g_2$ and/or $g_3$ are now arbitrary nonlinear functions. Thus, $\lambda_{u_1}(t)$ can be a function of any shape and the optimal control will not be uniquely defined on intervals where $\lambda_{u_1}(t)=0$, a.k.a.\ the singular arc.  The problem in Section \ref{subsec:SOSExample} is an example where such an issue arises. 
\end{proof}

 For a direct collocation method to yield the correct solution for singular control problems, one might have to use a multi-phase formulation and additionally impose the singular arc condition specifically on the phases with singular control. For example, if one takes the same approach (as in the proof) for the example problem in Section \ref{subsec:SOSExample}, the condition for singular control to occur is when $\lambda_{x_2}=0$. Repetitively taking time derivatives of this equation would yield the singular arc condition $u(t)=x_1(t)$. We again note that this approach requires analytical derivations and would become increasingly challenging for complex real-world problems. 

An ad-hoc method sometimes used in practice for dealing with the singular arc is to augment the original objective with an additional regularization term (e.g.\ in \cite{Bonnans2008}), often in the form of $\rho ||\nu||_{\mathcal{L}_1}$ or $\rho ||\nu||_{\mathcal{L}_2}^2$. With relatively large values of the penalty weight $\rho$, the fluctuations on the singular arc can be suppressed, but at the cost of obtaining sub-optimal trajectories. To get closer to the optimal from this point, the problem may need to be repetitively solved with the penalty weight gradually reduced.

\subsection{On-mesh Implementation}
\label{subsec:onmeshimplementation}
To mitigate the above-mentioned shortcomings, a method is proposed to directly impose algebraic rate constraints for input variables on the discretization grid. Based on previous work \cite{betts2010practical}, we generalize this on-mesh approach for all collocation methods ($h$, $p$ and $hp$ type), as well as for state variables. 

Since the treatment for state variables $x$ and input variables $u$ are similar, for simplicity we will use $z$ to represent the variable on which the rate constraints are imposed. If $Z_i$ represents the discretized version of $z$ at time instance~$i$, then the numerical differentiation of $z$ at that grid point ($Z_i'$) can be calculated using $s$-point finite difference approximations, with $s$ the number of data points in the interval (including endpoints). See Table \ref{tab:NumericalDifferentiationSchemes} for the formulations of some of the most commonly used discretization methods, with $\Delta \tau_i = \tau_{i+1}-\tau_i$, $\Delta t = t_f-t_0$, and $\mathcal{A}_{LGR}$ is the Legendre-Gauss-Radau (LGR) differentiation matrix. Details on the determination of the numerical differentiation equations are available in \cite{fornberg1988generation}.

\begin{table}[b]
	\begin{center}
	\caption{Numerical differentiation schemes}
	\label{tab:NumericalDifferentiationSchemes}
		\begin{tabular}{c|c|c}
		 \multirow{2}{*}{\textbf{Method}} & \textbf{No. of Data}  & \multirow{2}{*}{\textbf{Numerical Differentiation}}\\
		 & \textbf{Points} ($r$)& \\
		\hline Trapezoidal & 2 &\multirow{2}{*}{$Z'_i=\frac{Z_{i+1}-Z_i}{\Delta t \Delta \tau_i}$} \\
		($h$) & (equal spaced) &\\
		\hline  & &\multirow{2}{*}{$Z'_i=\frac{-3Z_i+4Z_{i+1/2}-Z_{i+1}}{\Delta t \Delta \tau_i}$}\\
		 & \\
		 Hermite & 3 &\multirow{2}{*}{$Z'_{i+1/2}=\frac{Z_{i+1}-Z_{i}}{\Delta t \Delta \tau_i}$}\\
		 Simpson ($h$)& (equal spaced) &\\
		 & &\multirow{2}{*}{$Z'_{i+1}=\frac{Z_{i}-4Z_{i+1/2}+3Z_{i+1}}{\Delta t \Delta \tau_i}$}\\
		 & \\
		\hline LGR & N+1&\multirow{2}{*}{$Z'_{1:N+1}=\frac{2}{\Delta t}\mathcal{A}_{LGR}Z_{1:N+1}$} \\
		 ($p$/$hp$) & (LGR bases) &\\
		\hline
		\end{tabular} 
	\end{center}
\end{table}

Note that for $p$/$hp$ methods, the numerical differentiation for all grid points on the polynomial ($i=1, \hdots, N^{(k)}$) are obtained altogether. It is also worth mentioning that if Legendre-Gauss-Radau (LGR) collocation is used, the end-point value for the control ($U_{N+1}^{(K)}$) may need to be approximated.

It is then straightforward to implement the rate constraints as linear constraints
\begin{subequations}
\label{eqn:RateConstraintLinear}
\begin{align}
\dot{z}_{L} - Z'_i &\le 0\\
 Z'_i - \dot{z}_{U} &\le 0
\end{align}
\end{subequations} 
for all possible values of $i$. This approach will be referred to as the \emph{on-mesh} implementation.

The on-mesh implementation of rate constraints has several benefits in comparison to the conventional add-state and add-path constraint approaches. Firstly, we compare the solution quality in terms of singular arcs. A challenge arises here, since the singular arc problem is commonly analyzed with the original OCP \eqref{eqn:OCPBolza}, but~\eqref{eqn:RateConstraintLinear} is a discretized formulation that does not have the continuous form.  Therefore, we do not yet have a mathematically rigorous proof that the on-mesh implementation will be singular-arc free using Pontryagin's minimum principle. However, one would observe that, without introducing a dynamic constraint in the form of $\dot{u}(t)=\nu(t)$, the singular control situation as described in Proposition \ref{prop: OCPGeneralSingular} will not occur, at least not in the same way. In addition, in our computational experience, we had not encountered a single case where the on-mesh implementation causes a singular-arc free problem to become singular. In contrast, the on-mesh rate constraint method was able to convert many well-known singular control problems to be singular arc free ones, with one example demonstrated in Section \ref{subsec:SOSExample}.

Another major advantage in comparison to the conventional implementation is regarding the computational cost. For systems with nonlinear dynamics, rate constraints on state variables implemented through \eqref{eqn:SBPathf} will be nonlinear path constraints relating different state variables at the same time instance. Thus their Jacobian and Hessian contributions can make the solution of the OCP computationally demanding. In contrast, on-mesh implementation of rate constraints with~\eqref{eqn:RateConstraintLinear} are linear constraints, with no contribution to the Hessian. 

In addition, note that the rate constraints \eqref{eqn:RateConstraintLinear} only depend on the numerical differentiation schemes. Thus, once a discretization scheme for the OCP has been chosen, and the corresponding discretization mesh has been determined, the Jacobian contributions of the rate constraint equations can be pre-computed during the transcription process. Therefore, although the NLP dimension increases more rapidly with the on-mesh implementation as shown in Table~\ref{tab:ProblemSizeInput} and~\ref{tab:ProblemSizeState}, the computational complexity for obtaining the derivative information with respect to the rate constraint equations is actually lower than the conventional approach. Altogether, the computational advantages can be rather significant, as demonstrated with the example problem.

\begin{table}[tb]
	\begin{center}
	\caption{Contribution to the NLP dimensions with an $N$-point mesh for different rate constraint implementations on input variables}
	\label{tab:ProblemSizeInput}
		\begin{tabular}{c|c|c}
		 & \textbf{add-state} &  \textbf{on-mesh}  \\
		 \hline   2 point, & $N$ linear  & $2N$ linear\\
		 collocated & inequality constraints & inequality constraints\\
		  (Trapezoidal) & (defect constraints) & (pre-computed) \\
		 \hline  3 point, & $2N-1$ linear & $4N-4$ linear \\
		 collocated & inequality constraints & inequality constraints\\
		  (H-S) & (defect constraints) & (pre-computed) \\
		  \hline  $p$ point, & $p(N-1)$ linear  & $2p(N-1)$ linear \\
		 collocated & inequality constraints & inequality constraints\\
		  (LGR) & (defect constraints) & (pre-computed) \\
		\hline
		\end{tabular} 
	\end{center}
\end{table}

\begin{table}[tb]
	\begin{center}
	\caption{Contribution to the NLP dimensions with an $N$-point mesh for different rate constraint implementations on state variables}
	\label{tab:ProblemSizeState}
		\begin{tabular}{c|c|c}
		 & \textbf{add-path constraint} &  \textbf{on-mesh}  \\
		 \hline   2 point, & $2N$ nonlinear  & $2N$ linear \\
		 collocated & inequality constraints & inequality constraints\\
		  (Trapezoidal) & (path constraints) & (pre-computed) \\
		 \hline  3 point, & $4N-2$ nonlinear  & $6N-6$ linear \\
		 collocated & inequality constraints & inequality constraints\\
		  (H-S) & (path constraints) & (pre-computed) \\
		  \hline  $p$ point, & $2p(N-1)$ nonlinear  & $2p(N-1)$ linear \\
		 collocated & inequality constraints & inequality constraints\\
		  (LGR) & (path constraints) & (pre-computed) \\
		\hline
		\end{tabular} 
	\end{center}
\end{table}

A remark is appropriate when comparing the on-mesh implementation in Table~\ref{tab:ProblemSizeState} to Table~\ref{tab:ProblemSizeInput}: For Hermite-Simpson discretization, specifically, the increase in the size of the NLP for implementation on input variables is less than that on state variables. This is because, when the control $u$ is discretized as a quadratic function of time, the rate of change w.r.t.\ time ($\dot{u}$) is linear, thus extreme values only occur at the end-points of each interval ($U_i$ and $U_{i+1}$). In this special case only, the rate constraints relating to the middle points ($U'_{i+1/2}$) can be neglected. 

\section{Example Problems}
\label{sec:ExampleProblems}
 The problem of singular arcs is often demonstrated with toy problems in the literature (e.g.\ the first example), as they are much more illustrative and free from influence of other factors. However, this common practice often results in it being neglected by engineers working on complex problems. To show that it really matters, a second real-world example is also presented here to demonstrate the acclaimed benefits of the on-mesh rate constraint implementation in terms of solution quality and computational efficiency.

\subsection{Second order singular regulator}
\label{subsec:SOSExample}

First, we consider a simple regulator problem originally presented in \cite{aly1978computation}. This can be considered as regulation control of a double integrator system, with a constraint on the acceleration. 
\begin{subequations}
\begin{equation*}
\min_{x_1,x_2,u} \int_{0}^{5} x_1^2(t) + x_2^2(t) dt
\end{equation*}
subject to
\begin{equation*}
\dot{x_1}(t)=x_2(t), \quad \dot{x_2}(t)=u(t) \in [-1,1] \quad \forall t \in [0,5].
\end{equation*}
\end{subequations}
In this original OCP formulation, the optimal control is in the form of \textit{bang-singular}. Figure \ref{fig:CompInputSOS} shows that a numerical implementation of this OCP using direct collocation would yield fluctuations and ringing phenomena for solutions at collocation points, as well as for the trajectories in-between.

However, upon noticing that the second differential equation is equivalent to the add-state implementation of a rate constraint $-1 \le \dot{x_2}(t) \le 1$, we can remove that differential equation from the OCP and use the on-mesh rate constraint method instead. As illustrated in the figure, this approach successfully yields a stable and accurate solution in correspondence to the reference (analytical) optimal input trajectory. In other words, the proposed on-mesh rate constraint implementation has successfully converted the classical second order singular regulator problem into a singular-arc-free formulation. 

\begin{figure}[tb]
\begin{center}
\includegraphics[width=\columnwidth]{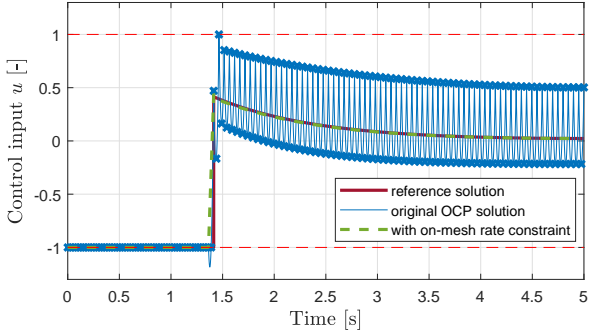}    
\caption{ Comparison of obtained control input trajectories for the second order singular regulator problem (H-S discretization with 199 collocation points (100 mesh nodes), crosses represent the collocation points) } 
\label{fig:CompInputSOS}
\end{center}
\end{figure}

\subsection{Aircraft go-around in the Presence of Windshear}
\label{subsec:AircraftExample}

Based on the developments by \cite{miele1988optimal,bulirsch1991abort1,bulirsch1991abort2}, a problem is presented in \cite{betts2010practical} where the aircraft needs to stay as high above the ground as possible after encountering a severe windshear during landing. 

 Details about the system dynamics, variable simple bounds, boundary conditions, aerodynamic modelling, as well as static parameter values, are available in the references above. The problem has state variables being the horizontal distance $d$,  the altitude $h$,  the true airspeed $V$, the flight path angle $\gamma$, and the input variable being the angle of attack $\alpha$ . We also emphasise that the problem requires the implementation of a rate constraint $\lvert \dot{\alpha}(t)\rvert \le 3$ deg/s.  

To avoid discontinuities and to assist convergence, a static optimization parameter $h_{min}$ is introduced to represent the minimum altitude. The objective can then be expressed as $\Phi(x(t_0),t_0,x(t_f),t_f,p):=-h_{min}$ together with a new path constraint $h(t) \ge h_{min}$.


Figure \ref{fig:InputRCSolution} illustrates the solution to the problem using Hermite-Simpson discretization. 
\begin{figure}[t]
\begin{center}
\includegraphics[width=\columnwidth]{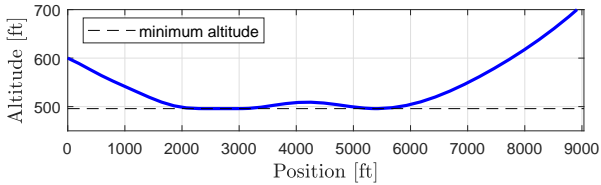}    
\caption{Solution to the aircraft go-around in the windshear problem, with input rate constraints} 
\label{fig:InputRCSolution}
\end{center}
\end{figure}
%
%
%
%
%
%
All figures presented in this paper are the outcome of a mesh refinement scheme that minimizes errors to the tolerance as specified in Table \ref{tab:MRScriteria}. 

\begin{table}[b]
\small
\begin{center}
\caption{Mesh refinement criteria}\label{tab:MRScriteria}
\begin{tabular}{c|c|c|c|c|c|c}
 & $d$  & $h$  & $v$ & $\gamma$  & $\alpha$  & Path Constraint\\
  & [ft] & [ft] & [ft/s] & [deg] & [deg] & [m]\\\hline
$\eta_{tol}$ & 1 & 0.5 & 0.1 & 0.5 & 0.5 & -\\ \hline 
$\varepsilon_{g_{tol}}$ & 1 & 0.5 & 0.1 & 0.5 & 0.5 & $1\times10^{-5}$\\ 
\hline 
\end{tabular}
\end{center}
\end{table}


\subsubsection{Implementation of Rate Constraints for Input Variables}
 Constraint $\lvert \dot{\alpha}(t)\rvert \le 3$ applies directly on the rate of change for the control input $\alpha$.  Using the conventional approach, $\alpha$ can be treated as an additional state variable, and $\nu$ introduced as the new control input with the dynamics 
\begin{equation}
\dot{\alpha}(t) =  \nu(t).
\end{equation}
Thus the rate constraints for $\alpha$ can be implemented as simple bounds on $\nu$: $-3 \le  \nu(t) \le 3$ [deg/s].

As mentioned earlier, due to the fact that the original control input $\alpha$ appears nonlinearly in the system, whereas the new input $\nu$ appears linearly, singular arc behaviour can occur, which is  shown in Figure \ref{fig:udothInputs}, with large fluctuations in the solution. In contrast, when the rate constraints are directly implemented on the discretization mesh instead (Figure \ref{fig:ARChInputs}), the optimal control input trajectory can be obtained with little ambiguity.

\begin{figure}[t]
    \centering
    \begin{subfigure}[b]{0.45\textwidth}
        \centering
        \includegraphics[width=\textwidth]{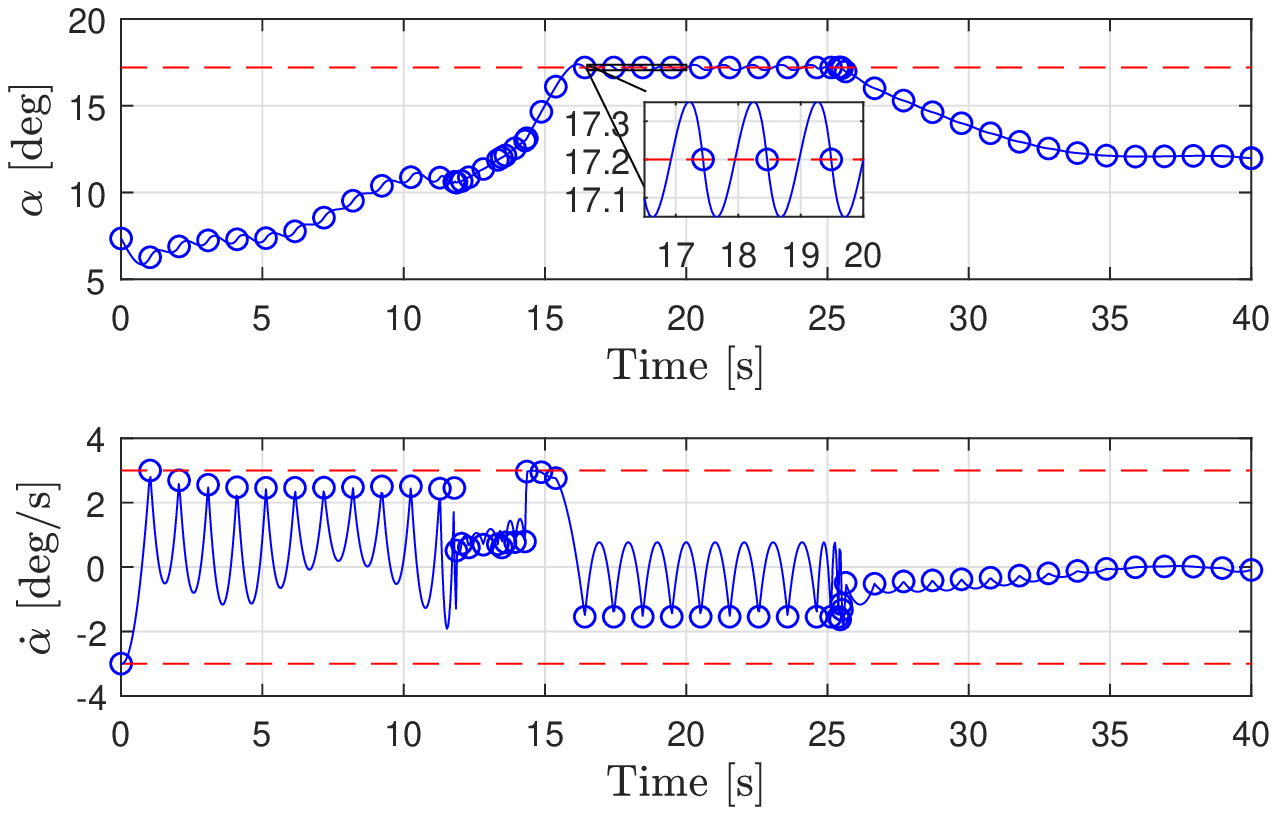}
    	\caption{implemented with additional state variable}
    	\label{fig:udothInputs}
    \end{subfigure}
	\begin{subfigure}[b]{0.45\textwidth}
        \centering
        \includegraphics[width=\textwidth]{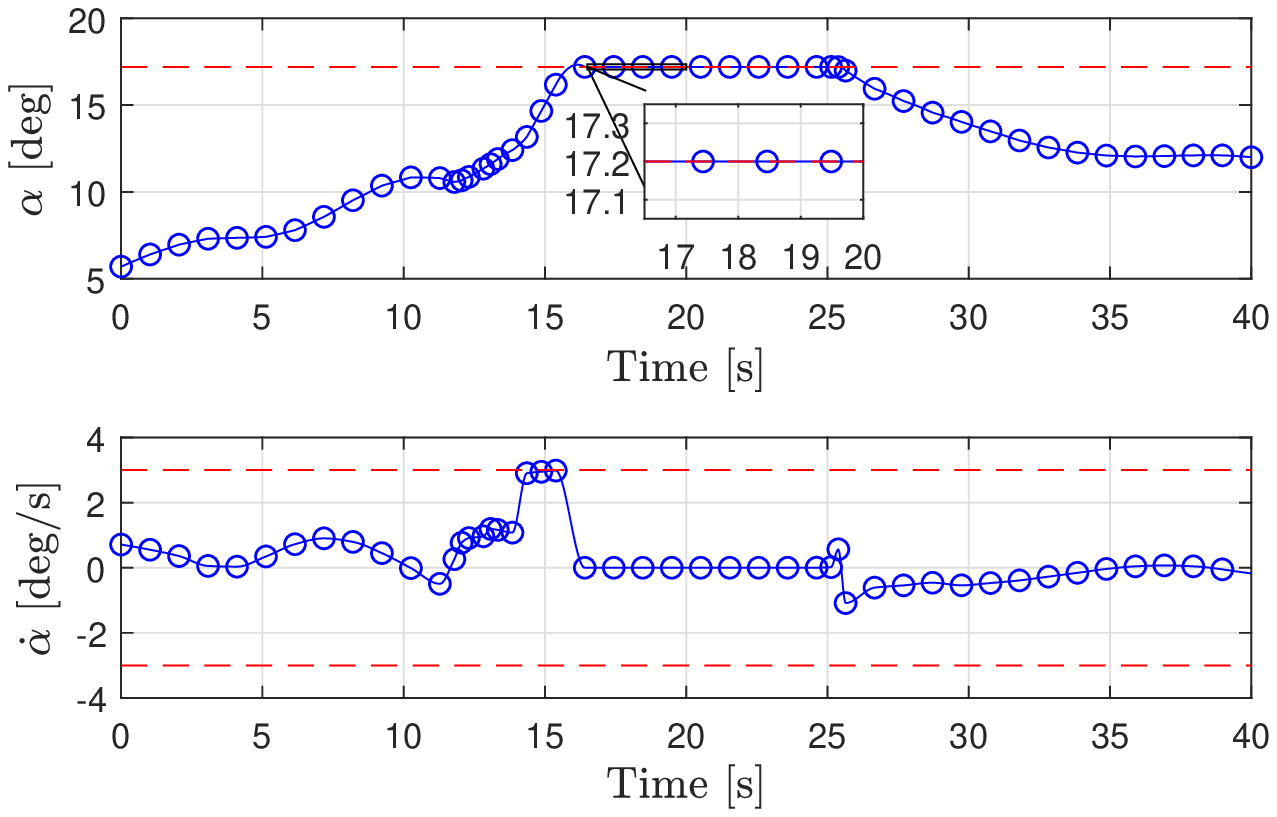}
    	\caption{direct implemention on the mesh}
    	\label{fig:ARChInputs}
    \end{subfigure}
    \caption{Control input for the solution to the aircraft go-around in the windshear problem, with different implementations for input rate constraints (H-S discretization, circles represent mesh points)}
\label{fig:hInputs}
\end{figure}


Comparing the solutions from the two implementations, it is interesting to observe that, although the integrated values (i.e.\ angle of attack) along the singular arc solution at the collocation points are generally the same, the interpolated trajectory from the add-state method is actually distorted by the fluctuations of its rate values.

With the LGR orthogonal collocation method, improvements are relatively minor. Because the end-point value for the control input is only approximated (not a collocation point), the errors have distortion effects on all previous points of the polynomial (Figure \ref{fig:ARChpInputs}). On the other hand, thanks to this extra level of continuity imposed by higher order polynomials, the problem of singular arc behaviour is far less pronounced with the conventional add-states implementation (Figure \ref{fig:udothpInputs}), when compared to $h$ type discretization methods. 

\begin{figure}[t]
    \centering
    \begin{subfigure}[b]{0.45\textwidth}
            \centering
            \includegraphics[width=\textwidth]{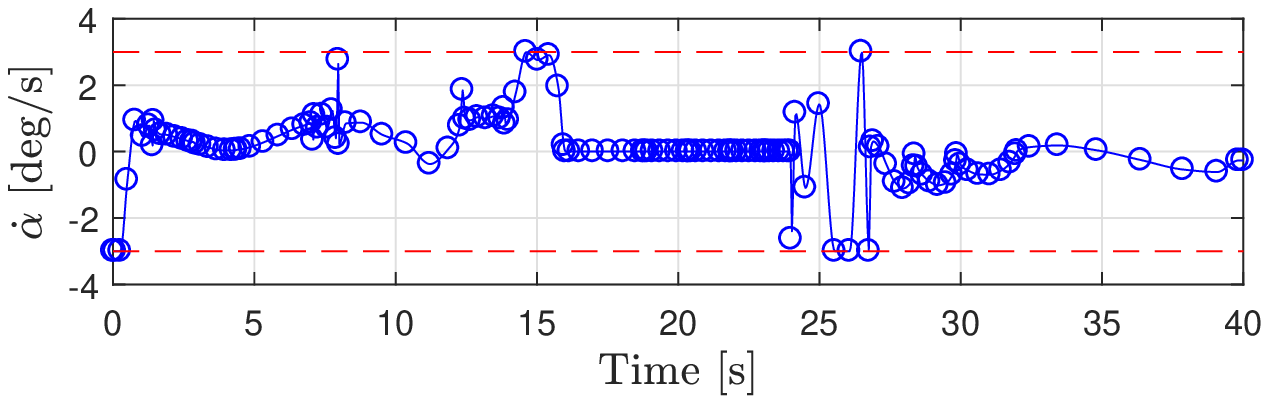}
    \caption{implemented with additional state variable}
    \label{fig:udothpInputs}
    \end{subfigure}
\begin{subfigure}[b]{0.45\textwidth}
            \centering
            \includegraphics[width=\textwidth]{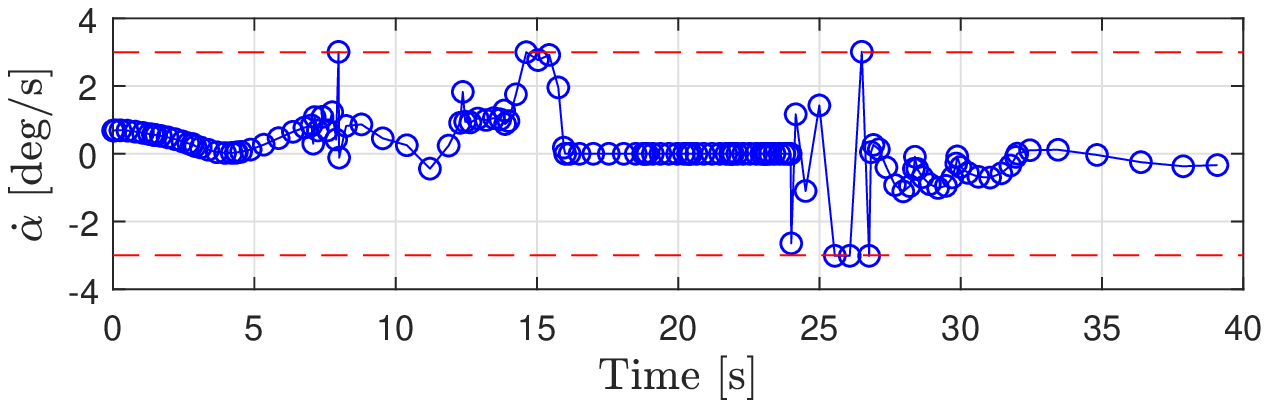}
    \caption{direct implemention on the mesh}
    \label{fig:ARChpInputs}
    \end{subfigure}
    \caption{Control input for the solution to the aircraft go-around in the windshear problem, with different implementations for input rate constraints (LGR discretization, circles represent collocation points)}
\label{fig:hpInputs}
\end{figure}

The results regarding computation times presented in Figure~\ref{fig:CompTimeInputRate} and~\ref{fig:CompTimeStateRate} were all obtained on an Intel Core i7-4770 computer, running 64-bit Windows 10 with Matlab 2017a. The OCPs were transcribed into NLP problems using the optimal control software ICLOCS2 \cite{ICLOCS2} and solved with the NLP solver IPOPT compiled with the sparse linear solver MA57 \cite{duff2004ma57}. The computation times are the averages of 10 independent runs, all starting with a very rough initial guess obtained using linear interpolation of initial and terminal conditions. 



From Figure~\ref{fig:CompTimeInputRate} it can be seen that, for the computation time per iteration, the on-mesh implementation saw a slight advantage in comparison to the conventional approach. This is because the on-mesh implementation explicitly exploits the fact that the linear rate constraints have no contribution to the Hessian, and the contributions to the Jacobian are constants and can be pre-computed. The scale of the benefit also grows with the size of the mesh, from about 5\% for a coarse mesh to around 10\% for the dense mesh. 

\begin{figure}[tb]
\begin{center}
\includegraphics[width=\columnwidth]{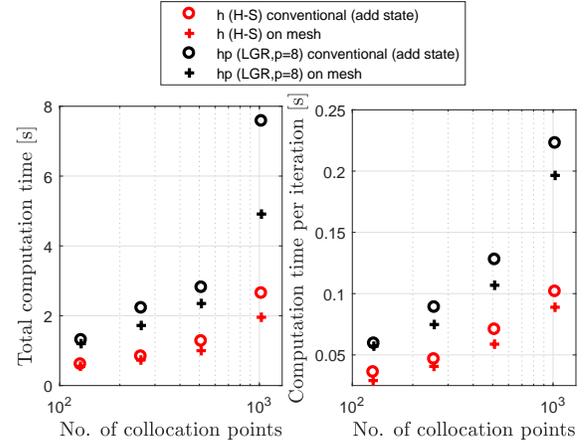}    
\caption{Comparison of computational performance, with input rate constraints} 
\label{fig:CompTimeInputRate}
\end{center}
\end{figure}

Figure~\ref{fig:Regularization} presents the computation performance of the problem when regularized with an additional $\rho ||\nu||_{\mathcal{L}_2}^2$ term. It can be seen that a relative large penalty weight is required to suppress the singular arc fluctuations, but with a larger $\rho$ the result diverges quickly from the optimal. Also note that for a single solve with regularization, the norm of angle of attack rate ($||\dot{\alpha}^{\ast}-\dot{\alpha}||_{\mathcal{L}_2}$) never reaches the accuracy level obtained by the on-mesh implementation with the same discretization mesh. Thus, to obtain a good solution, $\rho$ needs to be gradually reduced, making the process complicated and computationally inefficient --- it is also difficult to guarantee solution quality.

\begin{figure}[tb]
\begin{center}
\includegraphics[width=\columnwidth]{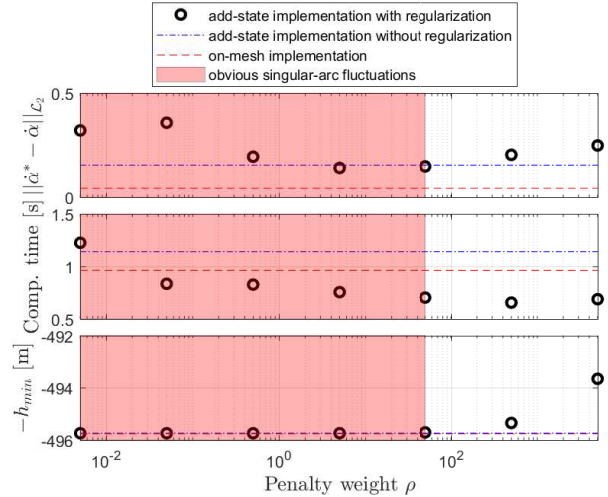}    
\caption{Solution of the regularized problem with different penalty weights. (H-S collocation with 79 collocation points (40 mesh nodes); reference solution $\dot{\alpha}^{\ast}$ obtained using a very dense mesh)} 
\label{fig:Regularization}
\end{center}
\end{figure}

\subsubsection{Implementation of Rate Constraints for State Variables}

We will additionally impose a rate constraint for the velocity state as $-5 \le \dot{V}(t) \le 5 $ [ft/s$^2$]. With this new formulation, the minimum altitude achievable is slightly lower.

From Figure \ref{fig:CompTimeStateRate}, it is obvious that the two methods are not computationally comparable. Due to the reasons explained in the end of Section~\ref{subsec:onmeshimplementation}, although the increase in NLP dimension is higher for the on-mesh implementation, the resulting (larger) NLP problems with linear rate constraints are actually much easier to solve. Consequently, regardless of the discretization method, the computation time per iteration recorded for the on-mesh implementations are all significantly (more than 30\%) lower than the conventional method. 

\begin{figure}[tb]
\begin{center}
\includegraphics[width=\columnwidth]{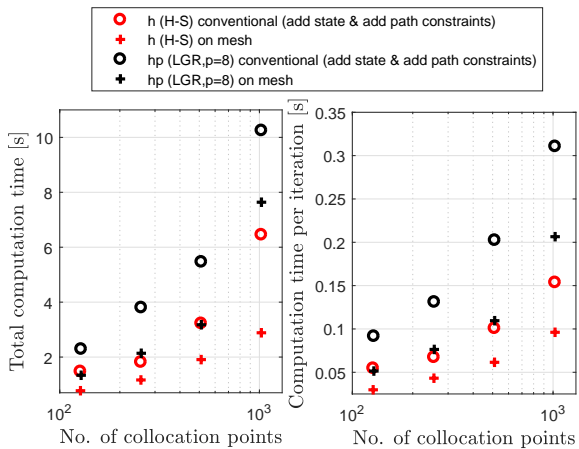}    
\caption{Comparison of computational performance, with state and input rate constraints} 
\label{fig:CompTimeStateRate}
\end{center}
\end{figure}

\section{Conclusions}
\label{sec:Conclusion}
Through both the mathematical analysis and a computation example, we demonstrated that mathematically equivalent formulations for rate constraints on state and input variables may not have the same solution quality and computational complexity in numerical implementations. For all collocation methods tested, and for both state and input variables, the proposed approach to directly implement rate constraints on the discretization mesh appears to be an attractive alternative for nonlinear optimization based control problems. 

In contrast to conventional approaches, the proposed method is numerically verified to not introduce singular control arcs, a phenomena which may lead to severe distortions and fluctuations in the optimal control trajectories. Additionally, this on-mesh implementation replaces the additional dynamic equations and nonlinear path constraints in conventional implementations with linear rate equations. Thus, there is no contribution to the Hessian and the contribution to the Jacobian can be pre-computed, enabling faster iterations. Based on observations, the scale of reduction in computation time seems to grow quite quickly with the increase in mesh size (number of collocation points), making the method especially suitable for the solution of large scale problems.  A possible downside of the proposed scheme is that the method cannot be directly implemented in most of the existing OCP packages through their current interfaces. However, we believe that this implementation can be easily added by the authors of the software, and ensure that all computational benefits are fully realized. 

\bibliography{main} 
\bibliographystyle{ieeetr}

\end{document}